\DeclareSymbolFont{bbold}{U}{bbold}{m}{n}
\DeclareSymbolFontAlphabet{\mathbbm}{bbold}
\title{Voiculescu's Theorem for Nonseparable \cstar-algebras}
\theoremstyle{plain}
	\newtheorem{theorem*}{Theorem}
	\newtheorem*{VT*}{Voiculescu's Theorem}
	\newtheorem*{GL*}{Glimm's Lemma}
	\newtheorem{theorem}{Theorem}[section]
	\newtheorem{proposition}[theorem]{Proposition}
	\newtheorem{lemma}[theorem]{Lemma}
	\newtheorem{corollary}[theorem]{Corollary}
	\newtheorem{corollary*}[theorem*]{Corollary}
\theoremstyle{definition}
	\newtheorem*{NP*}{Naimark's problem}
	\newtheorem*{dia*}{The diamond principle ($\diamondsuit$)}
	\newtheorem*{MA*}{$\textsf{MA}_\kappa$($\sigma$-centered)}
	\newtheorem*{acknow}{Acknowledgements}
\theoremstyle{remark}
\DeclareMathOperator{\Fin}{Fin}
\newcommand{\C}{\mathbb{C}}
\newcommand{\N}{\mathbb{N}}
\newcounter{my_enumerate_counter}
\newcommand{\set}[1]{\{#1\}}					
\newcommand{\cstar}{$\mathrm{C}^\ast$}
\newcommand{\A}{\mathcal{A}}
\author{Andrea Vaccaro}
\address[A. Vaccaro]{Department of Mathematics, University of Pisa, Largo Bruno Pontecorvo 5
    Pisa, Italy, 56127 - Department of Mathematics and Statistics, York University, 4700 Keelee Street,
    North York, Ontario, Canada, M3J 1P3}
\email[]{vaccaro@mail.dm.unipi.it}
\urladdr{http://people.dm.unipi.it/vaccaro/index.html}
\date{}
\keywords{Voiculescu's theorem, nonseparable \cstar-algebras, Martin's axiom.}
\begin{document}
\maketitle

\begin{abstract}
We prove that Voiculescu's noncommutative
version of the Weyl-von Neumann theorem can be extended to all
(not necessarily separable) unital, separably representable
 \cstar-algebras whose density character is strictly smaller
than $\mathfrak{p}$.
We show moreover that Voiculescu's theorem
consistently does not generalize to \cstar-algebras of larger density character.
\end{abstract}

\section{Introduction} \label{s0}
Voiculescu's work in \cite{voiculescu}, a far-reaching extension of the results
by Weyl and von Neumann on unitary equivalence up to compact perturbation of
self-adjoint operators (\cite{weyl}, \cite{vNeu}), is one of the cornerstones of
the theory of extensions of separable \cstar-algebras (see \cite{david} for
a survey of these results).

The label `Voiculescu's theorem' often refers to a
collection of results and corollaries from
\cite{voiculescu}, rather than a specific theorem.
Through this paper, it always refers to the following statement, where
for a complex Hilbert space $H$,
$\mathcal{B}(H)$ is the algebra of linear bounded operators from
$H$ into itself, and $\mathcal{K}(H)$ is the algebra of compact operators.
\begin{VT*} \label{voict}
Let $H, L$ be two separable Hilbert spaces, $\A \subseteq \mathcal{B}(H)$ a
separable unital \cstar-algebra and $\sigma : \A \to \mathcal{B}(L)$ a
unital completely positive map such that
$\sigma(a) = 0$ for all $a \in \A \cap \mathcal{K}(H)$. Then there is a sequence of isometries
$V_n: L \to H$ such that $\sigma(a) - V_n^* a V_n \in \mathcal{K}(L)$ and
$\lim_{n \to \infty} \lVert \sigma(a) - V_n^* a V_n \rVert = 0$ for all $a \in \A$.
\end{VT*}
See \S \ref{s1} for a definition of completely positive map. We remark that all $*$-homomorphisms, as well as the states
of any given \cstar-algebra, are completely positive (see \cite[Example 1.5.2]{brownozawa}). The specific instance of Voiculescu's theorem when $L =\C$ and
$\sigma$ is a state is known as Glimm's lemma (\cite[Lemma 3.6.1]{higsonroe}),
whose statement is the following.
\begin{GL*}
Let $H$ be a separable Hilbert space, $\A \subseteq \mathcal{B}(H)$ a separable
unital \cstar-algebra and $\sigma: \A \to \C$ a state such that $\sigma(a) = 0$
for all $a \in \A \cap \mathcal{K}(H)$. There exists a sequence of orthonormal
vectors $\set{\xi_n}_{n \in \N}$ such that $\sigma(a) = \lim_{n \to \infty} \langle a \xi_n,
\xi_n \rangle$ for every $a \in \A$.
\end{GL*}
We refer the reader to \cite{arveson} and \cite[\S 3.4--3.6]{higsonroe} for a proof
of these classical results.

The study of the extensions of a \cstar-algebra and of the invariant
Ext, one of the main fields of applications of Voiculescu's theorem,
flourished after the seminal work in \cite{bdf}
(see \cite{higsonroe} or \cite{blackk} for an introduction to this subject).
Given a unital \cstar-algebra $\A$, $\text{Ext}(\A)$ is the set of
all unital embeddings of $\A$ into the Calkin algebra $\mathcal{Q}(H)$
(modulo the relation of unitary equivalence). The set $\text{Ext}(\A)$
can be endowed with a semigroup structure, and one of the main consequences of
Voiculescu's theorem in this framework is
that $\text{Ext}(\A)$ has an identity when $\A$ is separable and unital.
This, along with the results in \cite{choieffros}, implies for instance that
$\text{Ext}(\A)$ is a group for every nuclear separable unital \cstar-algebra $\A$.

Voiculescu's theorem has also been recently employed in combination with
set theory in the study of which nonseparable \cstar-algebras embed into the
Calkin algebra, in \cite{farahvignati} and \cite{fkv}.
The current paper pushes further the interaction of the results in \cite{voiculescu}
with set theory, consistently extending Voiculescu's theorem to certain
`small' nonseparable \cstar-algebras.

Let $H, L$ be two separable Hilbert spaces,
$\A \subseteq \mathcal{B}(H)$ a nonseparable unital \cstar-algebra and $\sigma : \A \to
\mathcal{B}(L)$ a unital completely positive map such that $\sigma(a) = 0$ for
all $a \in \A \cap \mathcal{K}(H)$. If we do not assume anything else,
all that Voiculescu's theorem can guarantee is the existence of a net of isometries
$\set{V_\lambda}_{\lambda \in \Lambda}$ from $L$ into $H$
such that, for any separable subalgebra
$\mathcal{B} \subset \A$ and $\epsilon > 0$, there is $\mu \in \Lambda$ such that 
$\lambda \ge \mu$ implies $\sigma(a) - V_\lambda^* a V_\lambda \in \mathcal{K}(L)$ and
$\lVert \sigma(a) - V_\lambda^* a V_\lambda \rVert < \epsilon$ for all $a \in \mathcal{B}$.
In this note we prove the following theorem.
\begin{theorem*} \label{t1}
Let $H$ be a separable Hilbert space.
\begin{enumerate}
\item \label{i1t1} Let $L$ be a separable Hilbert space,
$\A \subseteq \mathcal{B}(H)$ a unital \cstar-algebra of density character strictly
less than $\mathfrak{p}$\footnote{The cardinal invariant $\mathfrak{p}$ is
the least size of a centered subfamily $F$ of $\mathcal{P}(\N) / \Fin$
which does not have a lower bound in $\mathcal{P}(\N) / \Fin$ (for an introduction
to this cardinal invariant and its basic properties see \cite{bart}).}
and $\sigma : \A \to \mathcal{B}(L)$ a unital completely positive map such that
$\sigma(a) = 0$ for all $a \in \A \cap \mathcal{K}(H)$.
Then there is a sequence of isometries
$V_n: L \to H$ such that $\sigma(a) - V_n^* a V_n$ is compact and
$\lim_{n \to \infty} \lVert \sigma(a) - V_n^* a V_n \rVert = 0$ for all $a \in \A$.
\item \label{i2t1} Given a cardinal $\lambda$, it is consistent with $\textsf{ZFC} + \mathfrak{c} \ge \lambda$ (where $\mathfrak{c}$ is the cardinality of the continuum)
that there exist a unital
\cstar-algebra $\A \subseteq \mathcal{B}(H)$ of density character less
than $\mathfrak{c}$, and $\sigma$, a state of $\A$ annihilating
$\A \cap \mathcal{K}(H)$, for which Glimm's lemma fails.
\end{enumerate}
\end{theorem*}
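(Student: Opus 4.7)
For part (\ref{i1t1}), my plan is to reduce to the classical Voiculescu theorem by forcing of the kind controlled by $\textsf{MA}_\kappa(\sigma\text{-centered})$, which is provable in \textsf{ZFC} for all $\kappa<\mathfrak{p}$. Fix an enumeration $\set{a_\alpha : \alpha < \kappa}$ of a norm-dense subset of $\A$, where $\kappa$ is the density character of $\A$. I would introduce a partial order $\mathbb{P}$ whose conditions are triples $p = (s_p, F_p, \varepsilon_p)$, where $s_p = (V_1, \ldots, V_n)$ is a finite sequence of isometries $L \to H$ drawn from a fixed countable reservoir (built in the ground model and dense in the set of isometries for a suitable topology), $F_p \subseteq \set{a_\alpha : \alpha < \kappa}$ is finite, $\varepsilon_p$ is a positive rational, and the approximation constraints $\sigma(a) - V_i^* a V_i \in \mathcal{K}(L)$ and $\lVert\sigma(a) - V_i^* a V_i\rVert < \varepsilon_p$ hold for every $a \in F_p$ and every $i \le n$. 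The order refines $s_p$, enlarges $F_p$, decreases $\varepsilon_p$, and requires the new isometries appended to $s_p$ to preserve the approximation constraints of $p$.

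The heart of the argument is $\sigma$-centeredness of $\mathbb{P}$: any finite collection of conditions sharing the same $s_p$ and $\varepsilon_p$ is compatible, because applying the classical Voiculescu theorem to the separable subalgebra generated by the union of the $F_{p_i}$'s provides further isometries meeting all pooled approximations simultaneously. One then introduces dense sets $D_{\alpha,k}$ requiring $a_\alpha \in F_p$ and $\varepsilon_p < 1/k$, together with dense sets $E_n$ requiring $|s_p| \ge n$; these total $\kappa$ many, so $\textsf{MA}_\kappa(\sigma\text{-centered})$ yields a filter meeting all of them, from which the desired sequence $(V_n)$ is read off.

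For part (\ref{i2t1}), the plan is to build the counterexample by transfinite recursion in a model of \textsf{ZFC} that simultaneously satisfies $\mathfrak{c} \ge \lambda$ and a diamond-style guessing principle at $\aleph_1$ (for instance a Devlin--Shelah-style diamond on a stationary subset of $\aleph_1$ compatible with large continuum, or else construct the example under \textsf{CH} and then increase $\mathfrak{c}$ via a forcing argued to preserve the counterexample). In such a model I would construct a unital \cstar-algebra $\A \subseteq \mathcal{B}(H)$ of density character $\aleph_1$ and a state $\sigma$ of $\A$ annihilating $\A \cap \mathcal{K}(H)$ in $\aleph_1$ stages. At stage $\alpha$, the guessing principle produces a candidate sequence $(\xi_n^\alpha)_{n \in \N}$ of orthonormal vectors of $H$; I would adjoin a self-adjoint operator $b_\alpha$ to $\A$ and assign $\sigma(b_\alpha)$ so that $\langle b_\alpha \xi_n^\alpha, \xi_n^\alpha\rangle$ provably does not converge to $\sigma(b_\alpha)$, thereby defeating the candidate. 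The union of these stages yields $\A$ and $\sigma$, which by construction violate Glimm's lemma.

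The main obstacle is $\sigma$-centeredness in part (\ref{i1t1}): amalgamating several finite approximations into a single condition requires a uniform application of the classical theorem to a separable subalgebra containing all of the pooled $F_{p_i}$, while at the same time maintaining compactness of each difference $\sigma(a) - V_i^* a V_i$ as the sequence of isometries is extended further. A secondary delicacy is selecting the countable reservoir from which the $V_i$'s are drawn so that it is both rich enough to meet the dense sets and sparse enough to provide the countable centering classes.
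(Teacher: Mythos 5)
There are genuine gaps in both parts.

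For part (\ref{i1t1}), the poset you describe cannot work, and the obstacle you flag at the end is in fact fatal rather than a delicacy. Your conditions commit to \emph{entire} isometries $V_i : L \to H$ at a finite stage, subject to $\sigma(a) - V_i^* a V_i \in \mathcal{K}(L)$ for the finitely many $a \in F_p$. The dense sets $D_{\alpha,k}$ then fail to be dense: once $V_1$ has been fixed (necessarily using only a separable subalgebra of $\A$), there is in general an element $a_\alpha \in \A$ outside that subalgebra for which $\sigma(a_\alpha) - V_1^* a_\alpha V_1$ is not compact (and has large norm), and no extension of the condition can repair this, since your order requires the constraints to hold for \emph{all} $i \le n$ and all $a \in F_p$. (There is also no countable norm-dense reservoir of isometries of an infinite-dimensional $L$ into $H$, so the centering classes you propose are unavailable.) The missing idea is the block-diagonal reduction from Arveson's proof, which is exactly what the paper uses: one first reduces $\sigma$ to a block-diagonal map $\bigoplus_n \sigma_n$ on $L = \bigoplus_n L_n$ with each $L_n$ finite-dimensional (this reduction itself needs a quasicentral approximate unit, obtained by another $\sigma$-centered poset), and then the forcing conditions fix only finitely many \emph{finite-rank} pieces $W_i : L_i \to H$ with entries in a countable dense set. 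Compactness of $\sigma(a) - V^* a V$ for the assembled $V = \bigoplus W_i$ is then a \emph{tail} condition --- the finitely many blocks chosen before $a$ entered the finite side of the condition contribute only a finite-rank perturbation, while the later blocks satisfy $\lVert \sigma_i(a) - W_i^* a W_i\rVert < \epsilon/2^i$ --- so the relevant dense sets really are dense by the separable Voiculescu theorem. Finally, note that constraints must be imposed only on \emph{newly added} isometries relative to the \emph{old} finite set $F_q$ (as in the paper's item (4)); imposing them on all $i \le n$ as you do destroys density even in the block-diagonal setting.

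For part (\ref{i2t1}), the diamond-style recursion cannot succeed as described: the candidate orthonormal sequences form a set of size $\mathfrak{c} \ge \lambda$, whereas your construction has length $\aleph_1$ (and must, since the algebra is to have density character $< \mathfrak{c}$), so no guessing principle on $\omega_1$ can enumerate, let alone defeat, all candidates; ordinary $\diamondsuit$ moreover implies \textsf{CH}. Your fallback (``construct under \textsf{CH} and preserve by forcing'') is the right shape, but the entire content of the proof lies in \emph{why} the counterexample survives the forcing that blows up $\mathfrak{c}$, and in particular why the new sequences of vectors added by the forcing do not witness Glimm's lemma --- a point your plan does not address. The paper's argument is a counting argument requiring no recursion at all: take $\A \cong \ell^\infty(\N)$ atomic masa in $\mathcal{B}(H)$; its pure states annihilating $\A \cap \mathcal{K}(H)$ correspond to nonprincipal ultrafilters, so there are $2^{\mathfrak{c}}$ of them, each failing Glimm's lemma, while there are only $\mathfrak{c}$ sequences of vectors. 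Starting from $\mathfrak{c} = \aleph_1$, $2^{\aleph_1} = \aleph_3$ and adding $\aleph_2$ Cohen reals, the $\aleph_3$ ground-model bad states cannot all be served by the at most $\aleph_2$ sequences of vectors in the extension, so some state still violates Glimm's lemma while $\A$ now has density character $\aleph_1 < \mathfrak{c} = \aleph_2$.
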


Theorem \ref{t1} gives the following corollary.
\begin{corollary*} \label{c}
The statement `Voiculescu's theorem holds for all separably representable
\cstar-algebras of density character less than $\mathfrak{c}$' is independent from
\textsf{ZFC}. Moreover, it is independent from $\textsf{ZFC} + \mathfrak{c} \ge \lambda$
for any cardinal $\lambda$.
\end{corollary*}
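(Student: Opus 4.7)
The corollary is essentially a juxtaposition of the two parts of Theorem \ref{t1}, so the plan is to derive each consistency statement from one part, using standard facts about the cardinal characteristic $\mathfrak{p}$.

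For the negative half—the consistency of a failure of Voiculescu's theorem for some separably representable \cstar-algebra of density character $<\mathfrak{c}$—I would just invoke part \eqref{i2t1} of Theorem \ref{t1}: given any cardinal $\lambda$, it supplies a model of $\ZFC + \mathfrak{c}\ge\lambda$ in which Glimm's lemma already fails for some unital $\A\subseteq \mathcal{B}(H)$ with $H$ separable and $\text{dens}(\A)<\mathfrak{c}$. Since every instance of Voiculescu's theorem specializes to an instance of Glimm's lemma by taking $L=\C$ and $\sigma$ a state, this witnesses the failure of the Voiculescu statement in the same model.

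For the positive half—the consistency of Voiculescu's theorem for all separably representable \cstar-algebras of density character $<\mathfrak{c}$, together with $\mathfrak{c}\ge\lambda$—I would appeal to part \eqref{i1t1}: it suffices to produce a model in which $\mathfrak{p}=\mathfrak{c}$ and $\mathfrak{c}\ge\lambda$, because then any separably representable unital $\A$ with $\text{dens}(\A)<\mathfrak{c}=\mathfrak{p}$ falls under part \eqref{i1t1}. Such a model is furnished classically by forcing Martin's axiom with $\mathfrak{c}=\kappa$ for any prescribed regular uncountable $\kappa\ge\lambda$ (Solovay--Tennenbaum), since $\textsf{MA}$ implies $\mathfrak{p}=\mathfrak{c}$; the case $\lambda\le\aleph_1$ is already covered by $\textsf{CH}$, where $\mathfrak{p}=\mathfrak{c}=\aleph_1$ trivially. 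Combining the two halves yields the independence both from $\ZFC$ alone and from $\ZFC+\mathfrak{c}\ge\lambda$ for every cardinal $\lambda$.

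No serious obstacle arises: the whole work has been done inside Theorem \ref{t1}, and the corollary amounts to recalling that one can consistently arrange $\mathfrak{p}=\mathfrak{c}$ with $\mathfrak{c}$ arbitrarily large. The only point that requires minimal care is ensuring that the conclusion `density character $<\mathfrak{c}$' in the positive statement matches the hypothesis `density character $<\mathfrak{p}$' of part \eqref{i1t1}; this is precisely why the Martin's axiom model is chosen.
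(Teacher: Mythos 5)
Your proposal is correct and follows exactly the paper's own argument: Martin's axiom (forced via Solovay--Tennenbaum with $\mathfrak{c}$ arbitrarily large) gives $\mathfrak{p}=\mathfrak{c}$, so item \eqref{i1t1} yields the positive direction, while item \eqref{i2t1} gives the consistent failure, with Glimm's lemma being the special case $L=\C$ of Voiculescu's theorem. The extra care you take in matching `density character $<\mathfrak{c}$' to `density character $<\mathfrak{p}$' is precisely the point the paper's one-line proof relies on.
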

\begin{proof}
Under Martin's axiom, which implies $\mathfrak{p} = \mathfrak{c}$,
the statement holds by item \ref{i1t1} of theorem \ref{t1}.
The consistent failure of the statement follows by item \ref{i2t1} of theorem \ref{t1}.
\end{proof}

The argument used to obtain the first part of theorem \ref{t1} is inspired to the proof of
Voiculescu's theorem as given by Arveson in \cite{arveson}
(see also \cite[\S 3.4--3.6]{higsonroe}). We show that such proof
essentially consists of a sequence of diagonalization arguments
which are equivalent to applications of the Baire category theorem to certain
$\sigma$-centered partial orders (see \S \ref{s1} for a definition).
Item \ref{i1t1} of theorem \ref{t1}
then follows by the results in \cite{bell}, where it is shown that
Martin's axiom holds for $\kappa$-sized families of dense subsets of $\sigma$-centered
partial orders if and only if $\kappa < \mathfrak{p}$.

The second item of theorem \ref{t1} is obtained via an application of Cohen's forcing
and a simple cardinality argument. Starting from a \cstar-algebra $\A$ of density character
$\mathfrak{c}$ for which Glimm's lemma fails, we show that the lemma still
fails for $\A$ also after adding enough (but not too many) Cohen reals.

We remark that Voiculescu's theorem is false in general for
subalgebras of $\mathcal{B}(H)$ of denisty character $\mathfrak{c}$,
as witnessed by $\ell^\infty(\N)$, $L^\infty([0,1])$ and $\mathcal{B}(H)$ itself (see \S \ref{s3}). We do not know if the notion of smallness given by $\mathfrak{p}$ in this context
is optimal, or if it is consistent that there are \cstar-algebras of density character greater
than or equal to $\mathfrak{p}$ for which the conclusion of Voiculescu's theorem holds.

The paper is organized as follows. Section \ref{s1} is devoted to definitions
and preliminaries. In section \ref{s2} we prove item \ref{i1t1} of theorem \ref{t1} and list some standard corollaries
of Voiculescu's theorem which generalize to \cstar-algebras of density character
smaller than $\mathfrak{p}$.
Finally in \S \ref{s3} we give a proof of item \ref{i2t1}.

\section{Preliminaries} \label{s1}
Through this paper,
given a complex Hilbert space $H$,
$\mathcal{B}(H)$ is the algebra of linear bounded operators from
$H$ into itself, and $\mathcal{K}(H)$ is the algebra of compact operators.
The Calkin algebra $\mathcal{Q}(H)$ is the quotient $\mathcal{B}(H) / \mathcal{K}(H)$.

An \emph{approximate unit} of a \cstar-algebra $\A$ is a net
$\set{h_\lambda}_{\lambda \in \Lambda}$
of positive contractions of $\A$ such that $\lim_{\lambda} \lVert h_\lambda a - a
\rVert = \lim_{\lambda} \lVert a h_\lambda - a \rVert = 0$ for all $a \in \A$.
Given a \cstar-algebra $\A \subseteq \mathcal{B}(H)$, an approximate unit of $\mathcal{K}(H)$
is \emph{quasicentral} for $\A$ if
$\lim_\lambda \lVert [h_\lambda, a ] \rVert = 0$ for all $a \in \A$, where
$[b,c]$ denotes, for two operators $b,c \in \mathcal{B}(H)$, the commutant $bc- cb$.

For a \cstar-algebra $\A$, let $M_n(\A)$ be the \cstar-algebra of $n \times n$ matrices
with entries in $\A$.
Given two \cstar-algebras $\A$ and $\mathcal{B}$, a bounded linear
map $\sigma: \A \to \mathcal{B}$ is \emph{completely positive} if for all $n \in \N$
all the maps $\phi_n : M_n (\A) \to M_n(\mathcal{B})$, defined as
\[
\phi_n([a_{ij}]) = [\phi(a_{ij})]
\]
are positive, i.e. they send positive elements into positive elements.

The notation $F \Subset G$ stands for `$F$ is a finite subset of $G$'.

A \emph{partially ordered set} (or simply \emph{poset}) $(\mathbb{P}, \le)$
is a set equipped with a binary, transitive, antisymmetric, reflexive relation $\le$.
The poset $(\mathbb{P}, \le)$ is
\emph{centered} if for any $F \Subset \mathbb{P}$ there is
$q \in \mathbb{P}$ such that $q \le p$ for all $p \in F$, and it is
$\sigma$\emph{-centered} if it is the union of countably many centered sets.

A subset $D \subseteq \mathbb{P}$ is \emph{dense} if for every $p \in \mathbb{P}$
there is $q \in D$ such that $q \le p$.
A set $G \subseteq \mathbb{P}$ is a \emph{filter}
if $q \in G$ and $q \le p$ implies $p \in G$, and if for any $p, q \in G$ there is $r \in G$
such that $r \le p$, $r \le q$. Given $\mathcal{D}$ a collection of dense subsets of $\mathbb{P}$,
a filter $G$ is $\mathcal{D}$\emph{-generic} if $G \cap D \not= \emptyset$ for all $D \in
\mathcal{D}$. By the results in \cite{bell}, $\kappa < \mathfrak{p}$ is equivalent
to the following weak form of Martin's axiom.

\begin{MA*}
Given a $\sigma$-centered poset $(\mathbb{P} , \le)$
and $\mathcal{D}$ a collection of size $\kappa$ of dense subsets of $\mathbb{P}$,
there exists a $\mathcal{D}$-generic filter on $\mathbb{P}$.
\end{MA*}

Before moving to the proof of theorem \ref{t1}, we prove a simple preliminary fact.
It is known that for every \cstar-algebra $\A \subseteq \mathcal{B}(H)$
there is an approximate unit
of the compact operators which is quasicentral for $\A$ (see \cite[Theorem 1 p.330]{arveson}).
Moreover, if $\A$ is separable,
the quasicentral approximate unit can be chosen to be countable, hence sequential. This property can be generalized to all \cstar-algebras of density character $\kappa$ 
such that $\textsf{MA}_\kappa$($\sigma$-centered) holds. This is a simple fact, nevertheless its proof gives a fairly clear idea, at least to
the reader familiar with the proof of Voiculescu's theorem given in \cite{arveson},
of how to prove item \ref{i1t1} of theorem \ref{t1}.

\begin{proposition} \label{qc}
Let $H$ be a separable Hilbert space and $\A \subseteq \mathcal{B}(H)$
a \cstar-algebra of density character less than $\mathfrak{p}$. Then there
exists a sequential approximate unit $\set{h_n}_{n \in \N}$ of $\mathcal{K}(H)$ which
is quasicentral for $\A$.
\end{proposition}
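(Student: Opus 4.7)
The plan is to apply $\textsf{MA}_\kappa$($\sigma$-centered) with $\kappa$ equal to the density character of $\A$ to a countable, hence $\sigma$-centered, forcing whose generic filter directly codes a sequential approximate unit of $\mathcal{K}(H)$ that is quasicentral for $\A$. Because $\mathcal{K}(H)$ is separable, I first fix a sequential approximate unit $\{e_n\}_{n \in \N}$ of $\mathcal{K}(H)$ and a countable norm-dense subset $\mathcal{D}$ of the positive contractions of $\mathcal{K}(H)$.

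The poset $\mathbb{P}$ has as conditions triples $p = (\vec{f}^p, F^p, k^p)$, where $\vec{f}^p = (f^p_0, \dots, f^p_{n_p - 1})$ is a finite sequence in $\mathcal{D}$ with $\|f^p_j e_m - e_m\| < 1/(j+1)$ for every $j < n_p$ and every $m \le j$, $F^p \Subset \A$, and $k^p$ is a positive integer. I declare $q \le p$ exactly when $\vec{f}^q$ end-extends $\vec{f}^p$, $F^p \subseteq F^q$, $k^p \le k^q$, and $\|[f^q_j, a]\| < 1/k^p$ for every $a \in F^p$ and every newly added term, i.e.\ every $j$ with $n_p \le j < n_q$. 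Any two conditions sharing the same $\vec{f}$ are compatible via $(\vec{f}, F^p \cup F^q, \max(k^p, k^q))$, for which the new-terms constraint is vacuous; since $\mathcal{D}$ is countable the possible values of $\vec{f}$ form a countable family, and $\mathbb{P}$ is $\sigma$-centered.

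The main technical step is the density claim: for every $a \in \A$ and every $k, N \in \N \setminus \{0\}$, the set $D_{a, k, N}$ of conditions $p$ satisfying $a \in F^p$, $k^p \ge k$ and $n_p \ge N$ is dense in $\mathbb{P}$. Starting from an arbitrary $q$, I first pass to $q' = (\vec{f}^q, F^q \cup \{a\}, \max(k^q, k)) \le q$; then I extend $\vec{f}^{q'}$ up to length $N$ by invoking Arveson's classical sequential quasicentral approximate unit, applied to the separable \cstar-subalgebra of $\mathcal{B}(H)$ generated by $F^{q'}$. This yields a positive compact contraction $u$, which can be taken far enough along the approximate unit to guarantee $\|[u, b]\| < 1/k^{q'}$ for every $b \in F^{q'}$ together with $\|u e_m - e_m\| < 1/N$ for every $m < N$ simultaneously. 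A sufficiently close approximation to $u$ inside $\mathcal{D}$ preserves all these strict inequalities, and using it as $f^p_j$ for each $j \in [n_{q'}, N)$ produces the required $p \in D_{a, k, N}$ below $q'$.

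Finally, fix a norm-dense $D_0 \subseteq \A$ of cardinality $\kappa$ and consider the family $\{D_{a, k, N} : a \in D_0,\ k, N \in \N \setminus \{0\}\}$, of size $\kappa < \mathfrak{p}$. By $\textsf{MA}_\kappa$($\sigma$-centered) there is a generic filter $G$, and concatenating the sequences $\vec{f}^p$ for $p \in G$ gives a sequence $\{h_n\}_{n \in \N}$ of positive contractions in $\mathcal{K}(H)$. The approximate-unit clause built into the definition of condition together with a standard triangle estimate yields $\|h_n b - b\| \to 0$ for every $b \in \mathcal{K}(H)$, and $\|b h_n - b\| \to 0$ follows by self-adjointness of the $h_n$. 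Genericity at each $D_{a, k, N}$ forces $\|[h_n, a]\| \to 0$ for every $a \in D_0$, and this extends to all of $\A$ by norm density together with the uniform bound $\|h_n\| \le 1$. The main hurdle in the plan is the density argument: Arveson's theorem must be blended with a discretisation into $\mathcal{D}$ that simultaneously respects the strict approximate-unit condition and the commutator promise inherited from the condition being extended.
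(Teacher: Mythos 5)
Your proof is correct and follows essentially the same strategy as the paper's: a $\sigma$-centered poset of finite sequences of positive compact contractions (centered along conditions sharing the same finite sequence), with density of the relevant sets supplied by Arveson's separable quasicentral approximate unit theorem, and a generic filter obtained from $\textsf{MA}_\kappa$($\sigma$-centered). The only cosmetic differences are that you hard-wire the approximate-unit requirement into the definition of a condition via a fixed sequence $\set{e_m}$ and use a counter $k^p$ for the commutator bound, whereas the paper carries a finite set $J_p \Subset K$ inside each condition and uses the bound $1/j$; both devices serve the same purpose.
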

\begin{proof}
Fix a countable dense $K$ in the set of all positive norm one elements of $\mathcal{K}(H)$,
and $B$ dense in $\A$ of size smaller than $\mathfrak{p}$.
Let $\mathbb{P}$ be the set of tuples
\[
p = (F_p, J_p, n_p, (h^p_j)_{j \le n_p})
\]
where $F_p \Subset B$, $J_p \Subset K$, $n_p \in \N$
and $h^p_j \in K$ for all $j \le n_p$.
For $p, q \in \mathbb{P}$ we say $p \le q$ if and only if
\begin{enumerate}
\item $F_q \subseteq F_p$,
\item $J_q \subseteq J_p$,
\item $n_q \le n_p$,
\item  $h^p_j = h^q_j$ for all $j \le n_q$,
\item \label{itemqc} if $n_q < n_p$ then, for all $n_q < j \le n_p$, all $k \in J_q$ and
all $a \in F_q$, the following holds
\[
\max \set{ \lVert [ h_j^p , a] \rVert ,  \lVert h^p_j k  - k \rVert, \lVert kh^p_j   - k \rVert} < 1/j
\]
\end{enumerate}
The poset $(\mathbb{P}, \le)$ is $\sigma$-centred since, for
any finite $X\Subset \mathbb{P}$ such that
there is $n \in \N$ and $(h_j)_{j \le n} \in K^n$ satisfying $n_p = n$ and
$(h^p_j)_{j \le n} = (h_j)_{j \le n}$ for all $p \in X$, the condition
\[
r = \left( \bigcup_{p \in X} F_{p}, \bigcup_{p \in X} J_{p}, n, (h_j)_{j \le n}\right)
\]
is a lower bound for $X$. Let $\mathcal{D}$ be the collection
of the sets
\[
\Delta_{F, J,n} = \set{p \in \mathbb{P} : F_p \supseteq F, J_p \supseteq J,  n_p \ge n}
\]
for $F \Subset B$, $J \Subset K$ and $n \in \N$. The sets $\Delta_{F, J,n}$ are
dense since for every separable subalgebra of $\mathcal{B}(H)$
there is a sequential approximate unit of $\mathcal{K}(H)$ which is quasicentral for it.
A $\mathcal{D}$-generic filter produces a sequential approximate
unit of $\mathcal{K}(H)$ which is quasicentral for $\A$. Such filter exists by
$\textsf{MA}_{\lvert \mathcal{D} \rvert}$($\sigma$-centered), which holds since
$\mathcal{D}$ has size smaller than $\mathfrak{p}$. 
\end{proof}

\section{Voiculescu's Theorem and Martin's Axiom} \label{s2}
Similarly to what happens in \cite{arveson}, we split
the proof of item \ref{i1t1} of theorem \ref{t1} in two steps. First we prove the statement
assuming that the completely positive map $\sigma$ is block-diagonal
(see lemma \ref{voiclemma1}),
then in lemma \ref{voiclemma2} we show that the general case can be reduced to the
block-diagonal case.

\subsection{Block-Diagonal Maps}
A completely positive map $\sigma: \A \to \mathcal{B}(L)$ is \emph{block-diagonal}
if there is a decomposition $L = \bigoplus_{n \in \N} L_n$, where $L_n$ is
finite-dimensional for all $n \in \N$, which in turn induces a decomposition
$\sigma = \bigoplus_{n \in \N} \sigma_n$ where the maps
$\sigma_n : \A \to \mathcal{B}(L_n)$ are completely positive.

\begin{lemma} \label{voiclemma1}
Let $H, L$ be two separable Hilbert spaces,
$\A \subseteq \mathcal{B}(H)$ a unital \cstar-algebra
of density character less than $\mathfrak{p}$ and $\sigma: \A \to B(L)$
a block-diagonal, unital, completely positive map such that $\sigma(a) = 0$ for all $a \in \A \cap
\mathcal{K}(H)$.
Then there is a sequence of isometries
$V_n: L \to H$ such that $\sigma(a) - V_n^* a V_n \in \mathcal{K}(L)$ and
$\lim_{n \to \infty} \lVert \sigma(a) - V_n^* a V_n \rVert = 0$ for all $a \in \A$.
\end{lemma}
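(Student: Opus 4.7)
My plan is to mirror Proposition~\ref{qc}: I will define a $\sigma$-centered poset $\mathbb{P}$ whose conditions are finite approximations to the desired sequence of isometries, and then apply $\textsf{MA}_\kappa(\sigma\text{-centered})$ with $\kappa$ equal to the density character of $\A$. I fix a norm-dense $B \subseteq \A$ of cardinality less than $\mathfrak{p}$ and a countable ``scaffolding'' $\mathcal{V}$ parametrising suitable isometries $L \to H$ --- such a countable set exists because the block decomposition $L = \bigoplus_n L_n$ with each $L_n$ finite-dimensional yields norm-separable components $\mathcal{B}(L_n, H)$, so the relevant finite-rank building blocks live in a norm-separable space. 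A condition is a tuple $p = (F_p, n_p, (V^p_j)_{j \le n_p})$ with $F_p \Subset B$, $n_p \in \N$, and each $V^p_j$ an isometry $L \to H$ encoded by $\mathcal{V}$. I declare $p \le q$ iff $F_q \subseteq F_p$, $n_q \le n_p$, $V^p_j = V^q_j$ for $j \le n_q$, and for every $n_q < j \le n_p$ and $a \in F_q$:
\[
\bigl\lVert \sigma(a) - (V^p_j)^* a V^p_j \bigr\rVert < 1/j \quad \text{and} \quad \sigma(a) - (V^p_j)^* a V^p_j \in \mathcal{K}(L).
\]

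As in Proposition~\ref{qc}, $(\mathbb{P}, \le)$ will be $\sigma$-centered: two conditions sharing a common skeleton $(n_p, (V^p_j)_{j \le n_p})$ admit the common lower bound obtained by merging their $F$-parts, and there are only countably many skeletons. The dense sets will be $\Delta_{F, n} = \{p \in \mathbb{P} : F \subseteq F_p,\ n \le n_p\}$ for $F \Subset B$ and $n \in \N$. Their density is the content of the \emph{separable} block-diagonal Voiculescu theorem (see \cite{arveson} or \cite[\S 3.4--3.6]{higsonroe}) applied to the unital separable \cstar-subalgebra generated by $F \cup F_q$: it produces a genuine isometry extending $(V^q_j)_{j \le n_q}$ in the required sense, which I then represent within the scaffolding $\mathcal{V}$. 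The collection of such dense sets has size less than $\mathfrak{p}$, so $\textsf{MA}_\kappa(\sigma\text{-centered})$ delivers a generic filter.

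Reading off from the generic filter produces a sequence $(V_n)_{n \in \N}$ satisfying $\lVert \sigma(a) - V_n^* a V_n \rVert < 1/n$ and $\sigma(a) - V_n^* a V_n \in \mathcal{K}(L)$ for every $a \in B$ and all sufficiently large $n$. The extension from $B$ to $\A$ is automatic: for fixed $n$, the set of $a \in \A$ with $\sigma(a) - V_n^* a V_n \in \mathcal{K}(L)$ is norm-closed (since $\mathcal{K}(L)$ is closed in norm and both $\sigma$ and $a \mapsto V_n^* a V_n$ are norm-continuous), contains the dense $B$, and therefore equals $\A$; the same continuity-plus-density argument upgrades the norm estimates to all of $\A$.

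The main obstacle, exactly as in the proof of Proposition~\ref{qc}, is the correct design of the countable scaffolding $\mathcal{V}$: its elements must be genuine isometries $L \to H$, yet plentiful enough that the isometries produced by the separable block-diagonal Voiculescu theorem on any separable subalgebra of $\A$ can be represented (or approximated within tolerance $1/j$) by scaffolding members, so that $\Delta_{F, n}$ is actually dense. The block-diagonal hypothesis on $\sigma$ is precisely what makes this feasible: each scaffolding isometry can be specified by finite rational data in the norm-separable spaces $\mathcal{B}(L_n, H)$ for finitely many blocks of the decomposition, together with a canonical extension on the infinite tail, and this finite data is what the generic filter ultimately glues together.
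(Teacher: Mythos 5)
Your poset has the right general shape, but it records each $V_j$ as a \emph{single, already complete} isometry $L \to H$, chosen once and for all when some condition first reaches index $j$. At that moment the only constraints imposed on $V_j$ involve the finitely many elements of $F_q$ for the condition $q$ being extended; conditions entering the filter later never retroactively constrain $V_j$ (your order only restricts the \emph{newly added} isometries). Consequently, for a fixed $n$ the set of $a$ for which you can certify $\sigma(a) - V_n^* a V_n \in \mathcal{K}(L)$ is $\bigcup\{F_q : q \in G,\ n_q < n\}$, which is not dense in $\A$ and cannot be made dense by genericity: the set $\{q : a \in F_q,\ n_q < n\}$ is not dense in $\mathbb{P}$, so no choice of $\mathcal{D}$ forces the filter to meet it. Your closing argument (``the set of good $a$ is norm-closed and contains the dense $B$, hence is all of $\A$'') therefore fails at the words ``contains the dense $B$''. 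The norm-limit clause survives this (it is an asymptotic statement in $n$, and a $2\epsilon$-argument extends it from $B$ to $\A$), but the clause ``$\sigma(a)-V_n^*aV_n \in \mathcal{K}(L)$ for every $n$ and every $a \in \A$'' does not. The paper avoids exactly this by making the conditions record isometries $W_i : L_i \to H$ of the individual \emph{finite-dimensional} blocks, together with orthogonality requirements between $W_iL_i$ and $aW_jL_j$, $a^*W_jL_j$; the generic filter then assembles \emph{one} isometry $V=\bigoplus_i W_i$ per tolerance $\epsilon$. For any $a$, all but finitely many blocks of $V$ are constructed after $a$ has entered the filter, so the uncontrolled part of $\sigma(a)-V^*aV$ is a finite-rank perturbation and compactness holds for \emph{all} $a\in\A$.

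A second, related gap is the scaffolding $\mathcal{V}$. The isometries $L \to H$ do not form a norm-separable set when $L$ is infinite-dimensional, and your proposed remedy---approximate the isometry produced by the separable Voiculescu theorem by a scaffolding element within tolerance $1/j$---repairs only the norm inequality (an open condition) and not the membership $\sigma(a) - V^*aV \in \mathcal{K}(L)$: if $\lVert V - W\rVert$ is small, $V^*aV - W^*aW$ is small in norm but need not be compact, so the approximant may fall out of $\Delta_{F,n}$ as you have defined it. The block-diagonal hypothesis is indeed the key to countability, but it must be used as in the paper: the building blocks are isometries of the finite-dimensional $L_i$ whose values on a fixed basis lie in a countable dense subset $K$ of the unit ball of $H$, a genuinely countable family, and compactness is never a clause of the order---it is a consequence of the block structure, the summable bounds $\epsilon/2^i$, and the orthogonality conditions.
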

\begin{proof}
Fix $\epsilon > 0$.
By hypothesis $L = \bigoplus_{n \in \N} L_n$, with $L_n$
finite-dimensional for all $n \in \N$, and $\sigma$ decomposes as
$\bigoplus_{n \in \N} \sigma_n$, where $\sigma_n(a) = 0$ whenever $a \in \A \cap
\mathcal{K}(H)$ for all $n \in \N$.
Let $K$ be a countable dense subset of the unit ball of $H$ such that, for
every $\xi \in K$ the set $\set{ \eta \in K : \eta \perp \xi}$ is dense in $\set{
\eta \in H : \lVert \eta \rVert = 1, \ \eta \perp \xi}$.
Fix an orthonormal basis $\set{\xi^n_j}_{j \le k_n}$ for each $L_n$.
Consider the set $\mathbb{P}$ of the tuples
\[
p = (F_p, n_p, (W^p_i)_{i \le n_p})
\]
where $F_p \Subset \A$, $n_p \in \N$ and $W^p_i$ is an isometry of $L_i$ into $H$
such that $W^p_i \xi^i_j \in K$ for every $j \le k_i$ and $i \le n_p$.
We say $p \le q$ for two elements in $\mathbb{P}$ if and only if
\begin{enumerate}
\item $F_q \subseteq F_p$,
\item $n_q \le n_p$,
\item $W^p_i = W^q_i$ for all $i \le n_q$,
\item \label{i4bd} for $ n_q < i \le n_p$ (if any) we require $W_i L_i$ to be orthogonal to
$\set{ W_j L_j,  a W_j L_j,  a^* W_j L_j : j \le i, \ a \in F_q}$ and
\[
\lVert \sigma_i(a) - W^*_i a W_i \rVert < \epsilon/2^{i}
\]
for all $a \in F_q$.
\end{enumerate}
For any finite set of conditions $X \Subset \mathbb{P}$ such that
there is $n \in \N$ and $(W_i)_{i \le n}$ satisfying
$n_p = n$ and $(W^p)_{i \le n_p} = (W_i)_{i \le n}$ for all $p \in X$, the condition
\[
r = \left( \bigcup_{p \in X} F_p, n, (W_i)_{i \le n} \right)
\]
is a lower bound for $X$. Thus the poset $(\mathbb{P}, \le)$
is $\sigma$-centered. Let $\mathcal{D}$ be the collection of the sets
\[
\Delta_{F, n} = \set{p \in \mathbb{P} : F_p \supseteq F, n_p \ge n}
\]
for $n \in \N$ and $F \Subset B$, where $B$ is a fixed
dense subset of $\A$ of size smaller than $\mathfrak{p}$. By theorem \ref{voict} every
$\Delta_{F,n}$ is dense in $\mathbb{P}$ (the orthogonality
condition in item \ref{i4bd} of the definition of the order relation can be
obtained using proposition 3.6.7 in \cite{higsonroe}).
Let $G$ be a $\mathcal{D}$-generic filter, which exists since $\lvert \mathcal{D} \rvert
< \mathfrak{p}$, and thus $\textsf{MA}_{\lvert \mathcal{D} \rvert}$($\sigma$-centered),
holds. Let $V$ be the isometry from $\bigoplus L_n$ into $H$ defined
as $\bigoplus_{n \in \N} W_n$ where
$W_n = W_n^p$ for some $p \in G$ such that $n_p \ge n$. The isometry is well defined
since $G$ is a filter. The proof that $\sigma(a) - V^* a V \in \mathcal{K}(L)$
and that $\lVert \sigma(a) - V^* a V \rVert < \epsilon$ for all $a \in \A$ is the same
as in lemma 3.5.2 in \cite{higsonroe}.
\end{proof}

\subsection{The General Case}
The following lemma generalizes theorem 3.5.5 of \cite{higsonroe} to all
\cstar-algebras of density character smaller than $\mathfrak{p}$.

\begin{lemma} \label{voiclemma2}
Let $H, L, L'$ be separable Hilbert spaces,
$\A \subseteq \mathcal{B}(H)$ a unital \cstar-algebra of density character less
than $\mathfrak{p}$ and $\sigma: \A \to B(L)$ a unital completely positive map
such that $\sigma(a) = 0$ for all $a \in \A \cap \mathcal{K}(H)$. Then there
is a block-diagonal, unital completely positive map $\sigma': \A \to B(L')$,
such that $\sigma'(a) = 0$ for all $a \in \A \cap \mathcal{K}(H)$, and a sequence of isometries $V_n : H \to L$ such that
$\sigma(a) - V_n^* \sigma'(a) V_n \in \mathcal{K}(H)$ and $\lim_{n \to \infty} \lVert
\sigma(a) - V_n^* \sigma'(a) V_n \rVert = 0$ for all $a \in \A$.
\end{lemma}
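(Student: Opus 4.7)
The plan is to follow the proof of Theorem 3.5.5 of \cite{higsonroe} nearly verbatim, replacing the only step that uses separability of $\A$---the existence of a sequential quasicentral approximate unit of $\mathcal{K}(L)$ for $\sigma(\A)$---with an application of Proposition \ref{qc}. All remaining steps are manipulations of the approximate unit $\{e_n\}$ and of $\sigma$, and do not use separability of $\A$.

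First, I apply Proposition \ref{qc} in $\mathcal{B}(L)$ to the \cstar-subalgebra generated by $\sigma(\A)$. Since $\sigma$ is norm-continuous, its density character is at most that of $\A$, hence strictly less than $\mathfrak{p}$. This produces a sequential approximate unit $\{e_n\}_{n \in \N}$ of $\mathcal{K}(L)$ quasicentral for $\sigma(\A)$, and by standard adjustments one arranges $e_0 = 0$, $e_n \le e_{n+1}$, each $e_n$ of finite rank, and $\sum_n \lVert [d_n, \sigma(a)] \rVert$ smaller than any prescribed $\epsilon > 0$ for $a$ in any prescribed norm-bounded finite subset of a fixed dense $B \subseteq \A$ of size less than $\mathfrak{p}$ (here $d_n := (e_n - e_{n-1})^{1/2}$). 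These extra constraints are local in both $\{e_n\}$ and $B$, so they can be absorbed into the $\sigma$-centered poset of Proposition \ref{qc} without disrupting $\sigma$-centeredness; density of the corresponding subsets in the enlarged poset is the content of the separable case.

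Second, I define $\sigma'$ and the isometries block-diagonally as in the separable case. For each $n$, let $L'_n$ be a finite-dimensional subspace of $L$ containing $\mathrm{range}(d_n)$, let $P_n$ denote the orthogonal projection onto $L'_n$, set $L' := \bigoplus_{n \in \N} L'_n$, and put $\sigma'(a) := \bigoplus_n P_n \sigma(a) P_n$. Then $\sigma'$ is block-diagonal, unital, completely positive, and annihilates $\A \cap \mathcal{K}(H)$ because $\sigma$ does. Define $V : L \to L'$ by $V\xi := (d_n \xi)_n$; from $\sum_n d_n^2 = I$ it follows that $V^*V = I$, so $V$ is an isometry. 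A direct computation gives
\[
\sigma(a) - V^* \sigma'(a) V = \sum_n d_n \, [\sigma(a), d_n],
\]
which by the summable-commutator condition is a norm-convergent sum of compact operators. Its norm is less than $\epsilon$ on the prescribed dense subset, hence on all of $\A$ by a standard $3\epsilon$-argument using $\lVert \sigma \rVert \le 1$. Setting $\epsilon = 1/n$ and iterating the construction produces the desired sequence $V_n$.

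The main obstacle, as in Proposition \ref{qc}, is verifying that one can simultaneously arrange all the analytic requirements on the approximate unit within a single $\sigma$-centered poset. This follows from the fact that $\sigma$-centeredness is controlled by the initial finite segment $(e_0, \ldots, e_{n_p})$ stored in each condition, while the additional analytic constraints refer only to those $e_j$'s and to finitely many elements of $B$. Once this strengthened approximate unit is in hand, the construction of $\sigma'$ and of the $V_n$'s reproduces \cite[Theorem 3.5.5]{higsonroe} with no essential change.
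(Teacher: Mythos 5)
Your proposal is correct and follows essentially the same route as the paper: strengthen the $\sigma$-centered poset of Proposition \ref{qc} so that the generic filter yields a quasicentral approximate unit of $\mathcal{K}(L)$ with summable commutators $\lVert [(e_{n+1}-e_n)^{1/2}, \sigma(a)]\rVert$, then run the argument of \cite[Theorem 3.5.5]{higsonroe} unchanged. You merely spell out the Higson--Roe endgame (the definition of $\sigma'$, $V$, and the identity $\sigma(a)-V^*\sigma'(a)V=\sum_n d_n[d_n,\sigma(a)]$) in more detail than the paper, which simply cites it.
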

\begin{proof}
Fix $\epsilon > 0$. We use the same poset (and notation) defined in proposition \ref{qc}
to generate an approximate unit of $\mathcal{K}(H)$ which is quasicentral
for $\sigma[\A]$. Adjusting suitably
the inequality in item \ref{itemqc} of the definition of such poset
(see \cite[Lemma p.332]{arveson}), by $\textsf{MA}_{\lvert \mathcal{D} \rvert}$($\sigma$-centered)
there is a filter of $\mathbb{P}$ which generates an approximate unit
$(h_n)_{n \in \N}$ such that if
$a \in F_p$ for some $p \in G$, then for all $n > n_p$ we have
\[
\lVert [(h_{n+1} - h_n)^{1/2}, \sigma(a)]  \rVert < \epsilon/2^n
\]
From this point the proof goes verbatim as in theorem 3.5.5 of \cite{higsonroe}.
\end{proof}

The thesis of item \ref{i1t1} of theorem \ref{t1} follows composing the isometries obtained
from lemmas \ref{voiclemma1} and \ref{voiclemma2}.

\subsection{Corollaries and Remarks}
Voiculescu's theorem allows to infer several corollaries if the completely
positive map $\sigma$ is assumed to be a $*$-homomorphism.
Using item \ref{i1t1} of theorem \ref{t1}, these results generalize to
separably representable \cstar-algebras of density character less than $\mathfrak{p}$.
We omit the proofs in this part as they can be obtained following
verbatim the arguments used in the separable case.

We introduce some definitions to ease the notation in the following statements.
Given a representation $\phi : \A \to \mathcal{B}(H)$, let $H_e$ be the Hilbert
space spanned by $(\phi[\A] \cap \mathcal{K}(H))H$. Since
$\phi[\A] \cap \mathcal{K}(H)$ is an ideal
of $\phi[A]$, the space $H_e$ is invariant for $\phi[A]$. The \emph{essential part of}
$\phi$, denoted $\phi_e$, is the restriction of $\phi$ to $H_e$.

Two representations $\phi: \A \to \mathcal{B}(H_1)$ and $\psi : \A \to \mathcal{B}(H_2)$
are \emph{equivalent} if there is a unitary map $U: H_1 \to H_2$ such
that $U^* \psi(a) U = \phi(a)$ for all $a \in \A$. They
are \emph{approximately equivalent} if there is a sequence of unitary maps
$U_n : H_1 \to H_2$ such that $U_n^* \psi(a) U_n - \phi(a) \in
\mathcal{K}(H_1)$ and $\lim_{n \to \infty} \lVert U_n^* \psi(a) U_n - \phi(a) \rVert = 0$
for all $a \in \A$. Finally, they
are \emph{weakly approximately equivalent} if there are two sequences of
unitary maps $U_n : H_1 \to H_2$ and $V_n : H_2 \to H_1$
such that $U_n^* \psi(a) U_n \to \phi(a)$ and $V_n^* \phi(a) V_n \to \psi(a)$
in the weak operator topology.

Corollaries \ref{voicMA2} and \ref{voicMA3} can be proved
using the proofs of \cite[Corollary 2 p. 339]{arveson} and
\cite[Theorem 5]{arveson} plus \cite[Corollary 1 p. 343]{arveson} respectively,
after substituting all the instances of Voiculescu's theorem with item \ref{i1t1} of theorem
\ref{t1}.

\begin{corollary} \label{voicMA2}
Let $H, L$ be two separable Hilbert spaces,
$\A \subseteq \mathcal{B}(H)$ a unital \cstar-algebra
of density character less than $\mathfrak{p}$ and $\phi: \A \to B(L)$
a unital representation such that $\phi(a) = 0$ for all $a \in \A \cap \mathcal{K}(H)$.
Then the direct sum representation $\text{Id} \oplus \phi$ on $H \oplus L$ is approximately
equivalent to $\phi$.
\end{corollary}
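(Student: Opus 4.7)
The strategy is to reproduce Arveson's proof of Corollary~2 on p.~339 of \cite{arveson}, replacing every appeal to the classical Voiculescu theorem by item \ref{i1t1} of Theorem \ref{t1}. Separability of $\A$ enters Arveson's argument only at that step, so once it is upgraded, the rest of the proof goes through verbatim.

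Concretely, I would first apply item \ref{i1t1} of Theorem \ref{t1} to the unital $*$-homomorphism $\phi:\A\to\mathcal{B}(L)$, viewed as a unital completely positive map annihilating $\A\cap\mathcal{K}(H)$. This yields a sequence of isometries $V_n:L\to H$ such that $\phi(a)-V_n^*aV_n\in\mathcal{K}(L)$ and $\lVert\phi(a)-V_n^*aV_n\rVert\to 0$ for every $a\in\A$. Informally, through the isometry $V_n$ the action of $\A$ on the subspace $V_nL\subseteq H$ is indistinguishable, modulo compacts of vanishing norm, from $\phi$.

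Second, I would assemble these isometries into unitaries $U_n:H\oplus L\to L$ implementing the approximate equivalence $\id\oplus\phi\sim\phi$. The statement implicitly requires $L\cong H\oplus L$, so we are in the regime where $L$ is infinite-dimensional, which is the only nontrivial case. Fix once and for all a unitary $Z:L\to L\oplus H$ and, for each $n$, split $H=V_nL\oplus(1-V_nV_n^*)H$, carrying the first summand back to $L$ via $V_n^{-1}$ and routing the orthogonal complement $(1-V_nV_n^*)H$ into the $H$-part of $L\oplus H$ supplied by $Z$, with the $L$ summand of $H\oplus L$ mapped to the remaining copy of $L$; composing with $Z^{-1}$ produces the desired unitary $U_n:H\oplus L\to L$. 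The estimate $\lVert\phi(a)-V_n^*aV_n\rVert\to 0$ together with the compactness $\phi(a)-V_n^*aV_n\in\mathcal{K}(L)$ then translates into $U_n^*\phi(a)U_n-(a\oplus\phi(a))\in\mathcal{K}(H\oplus L)$ with norm tending to zero, uniformly on a dense subset of $\A$ of cardinality less than $\mathfrak{p}$, and hence on all of $\A$ by a standard approximation.

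The main obstacle is precisely the bookkeeping in this second step: converting the isometries $V_n$ to unitaries $U_n$ and carefully verifying that the compact error $U_n^*\phi(a)U_n-(a\oplus\phi(a))$ has vanishing norm uniformly on a dense subset of $\A$. However, Arveson's manipulation is purely operator-theoretic and invokes no property of $\A$ beyond the conclusion of Voiculescu's theorem, so it transfers without modification to our setting once item \ref{i1t1} of Theorem \ref{t1} is in place.
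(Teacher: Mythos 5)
Your high-level plan coincides with the paper's proof, which literally consists of the sentence ``run Arveson's proof of Corollary~2, p.~339, substituting every appeal to Voiculescu's theorem by item (1) of Theorem~1''; your observation that separability enters Arveson's argument only at that point is exactly the intended justification. The problem is in your second step, where you try to make the bookkeeping explicit. The unitaries you construct go from $H\oplus L$ to $L$ and are claimed to satisfy $U_n^*\phi(a)U_n-(a\oplus\phi(a))\in\mathcal{K}(H\oplus L)$ with norm tending to $0$, but no choice of unitaries can achieve this in general. On the summand $(1-V_nV_n^*)H$, which your $U_n$ routes into a copy of $H$ sitting inside $L$, the operator $a\oplus\phi(a)$ acts as the compression of $a$, while $U_n^*\phi(a)U_n$ acts as a conjugate of a piece of $\phi(a)$; nothing in the conclusion of Theorem~1(1) relates these. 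Concretely, if $a\in\A\cap\mathcal{K}(H)$ is nonzero then $\phi(a)=0$ and the difference above equals $-(a\oplus 0)$, of norm $\lVert a\rVert$ for every $n$; more generally, approximate equivalence preserves operator norms, so the relation you are aiming at would force $\lVert\phi(a)\rVert=\lVert a\rVert$ for all $a$, i.e.\ $\phi$ faithful, which is not assumed.

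What Arveson's Corollary~2 actually asserts --- and what this corollary is meant to import --- is the absorption of $\phi$ by the identity representation: $\mathrm{Id}\oplus\phi$ is approximately equivalent to $\mathrm{Id}$, witnessed by unitaries from $H\oplus L$ onto $H$. These are built to act as the identity on larger and larger finite-dimensional subspaces of $H$ and to use the Voiculescu isometries $V_n$, with ranges orthogonal to those subspaces and approximately reducing for a prescribed finite set, to swallow the $L$-summand. (The approximate invariance of $V_nL$ is not part of the bare statement of Theorem~1(1), but it is recovered from it: since $\phi$ is multiplicative, $V_n^*a^*aV_n\approx\phi(a^*a)=\phi(a)^*\phi(a)\approx V_n^*a^*V_nV_n^*aV_n$, whence $\lVert(1-V_nV_n^*)aV_n\rVert$ is small.) Once the argument is set up in that direction, your substitution of Theorem~1(1) for Voiculescu's theorem does carry it through verbatim, as the paper intends; as written, though, your step two proves a statement that is false whenever $\A\cap\mathcal{K}(H)\neq\{0\}$ or $\phi$ is not faithful.
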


\begin{corollary} \label{voicMA3}
Let $\A$ be a separably representable unital \cstar-algebra of density less than
$\mathfrak{p}$
and $\phi, \psi$ two unital representations on some separable, infinite dimensional
Hilbert space $H$. The following are equivalent.
\begin{enumerate}
\item $\phi$ and $\psi$ are approximately equivalent,
\item $\phi$ and $\psi$ are weakly approximately equivalent,
\item $\text{ker}(\phi) = \text{ker} (\psi)$, $\text{ker} (\pi \circ \phi) =
\text{ker} (\pi \circ \psi)$ (here
$\pi: \mathcal{B} (H) \to \mathcal{Q}(H)$ is the quotient map)
and $\phi_e$ is equivalent to $\psi_e$.
\end{enumerate}
In particular, if $\text{ker}(\phi) = \text{ker} (\psi)$ and $\phi[A] \cap \mathcal{K}(H) =
\psi[A] \cap \mathcal{K}(H) = \set{0}$ then $\phi$ and $\psi$ are approximately equivalent.
\end{corollary}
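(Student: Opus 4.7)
My plan is to mirror Arveson's classical arguments for the separable version of this corollary, namely the proof of \cite[Theorem 5]{arveson} together with \cite[Corollary 1 p.~343]{arveson}, replacing every invocation of Voiculescu's theorem with item \ref{i1t1} of theorem \ref{t1}, and every appeal to \cite[Corollary 2 p.~339]{arveson} with corollary \ref{voicMA2}. The key observation that makes the adaptation routine is that every quotient and every separable subalgebra of $\A$ that one encounters still has density character strictly less than $\mathfrak{p}$, so all the tools developed in section \ref{s2} remain applicable throughout.

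The implication (1)$\Rightarrow$(2) requires no work, since norm convergence entails weak operator convergence. For (2)$\Rightarrow$(3) I would follow Arveson: weak convergence $U_n^* \psi(a) U_n \to \phi(a)$ immediately yields $\ker \psi \subseteq \ker \phi$, and symmetrically via the $V_n$; the equality $\ker (\pi \circ \phi) = \ker(\pi \circ \psi)$ is obtained using the characterisation of compact operators on the separable $H$ as those sending weakly convergent sequences to norm convergent ones, combined with the symmetric role of $U_n$ and $V_n$; finally $\phi_e$ is shown equivalent to $\psi_e$ by restricting the unitaries to the essential subspaces, where the nondegenerate action of the common compact image ideal allows to extract from the $U_n$ a strongly convergent subsequence whose limit is a unitary intertwiner.

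For (3)$\Rightarrow$(1), which is the main content, I would split $H = H_e^\phi \oplus (H_e^\phi)^\perp$ and $H = H_e^\psi \oplus (H_e^\psi)^\perp$, inducing decompositions $\phi = \phi_e \oplus \phi'$ and $\psi = \psi_e \oplus \psi'$ in which the outer pieces satisfy $\phi'[\A] \cap \mathcal{K}(H) = \psi'[\A] \cap \mathcal{K}(H) = \set{0}$. Using (3) I would conjugate so that $\phi_e = \psi_e$, and then apply corollary \ref{voicMA2} twice to conclude that $\phi$ is approximately equivalent to $\phi'$ and $\psi$ to $\psi'$. It remains to approximately identify $\phi'$ and $\psi'$: since $\ker \phi = \ker \psi$ forces $\ker \phi' = \ker \psi'$, the assignment $\psi'(a) \mapsto \phi'(a)$ gives a well-defined unital $*$-homomorphism from $\psi'[\A]$ into $\mathcal{B}(H)$ that vanishes on the (trivial) intersection $\psi'[\A] \cap \mathcal{K}(H)$, so item \ref{i1t1} of theorem \ref{t1} produces a sequence of isometries implementing the desired approximation. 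Interchanging the roles of $\phi'$ and $\psi'$ and combining yields the full approximate equivalence. The final \emph{In particular} clause then follows immediately: under its hypotheses both essential subspaces are zero, so $\phi_e$ and $\psi_e$ are trivially equivalent and $\ker(\pi \circ \phi) = \ker \phi = \ker \psi = \ker(\pi \circ \psi)$, hence (3) holds.

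The step I expect to require the most care is (2)$\Rightarrow$(3), in particular the preservation of the compact image ideal and the extraction of a unitary intertwiner from weak approximate equivalence alone; this is where one truly uses the sequential nature of $U_n,V_n$ together with separability of $H$. By contrast, (3)$\Rightarrow$(1) is essentially a bookkeeping exercise reducing to two applications of corollary \ref{voicMA2} and one of theorem \ref{t1}(\ref{i1t1}), once the essential/outer decomposition is in place.
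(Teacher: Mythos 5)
Your overall strategy---transplant Arveson's proofs of Theorem 5 and of Corollary 1 (p.~343), replacing each use of Voiculescu's theorem by item \ref{i1t1} of theorem \ref{t1} and each use of Arveson's Corollary 2 by corollary \ref{voicMA2}---is exactly the paper's proof, and your observation that every subalgebra and quotient encountered still has density character less than $\mathfrak{p}$ is the right reason why the transplant is routine. However, your sketch of (3)$\Rightarrow$(1) contains a step that is false as stated: you claim that corollary \ref{voicMA2} yields that $\phi=\phi_e\oplus\phi'$ is approximately equivalent to $\phi'$. Approximate equivalence preserves kernels, and whenever $\phi_e\neq 0$ there is an $a\in\A$ with $\phi(a)$ a nonzero compact operator; then $\phi'(a)\in\phi'[\A]\cap\mathcal{K}(H)=\{0\}$ while $\phi(a)\neq 0$, so $\phi$ and $\phi'$ cannot be approximately equivalent. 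Corollary \ref{voicMA2} does not apply here anyway: its hypothesis is that the summand being absorbed annihilates the compacts in the range of the other summand, and $\phi_e$ is precisely the part that fails this. The correct use of the corollary is in the opposite direction: after arranging $\phi_e=\psi_e$, use $\ker\phi=\ker\psi$ and $\ker(\pi\circ\phi)=\ker(\pi\circ\psi)$ to check that $\psi'$ induces a representation of $\phi[\A]$ annihilating $\phi[\A]\cap\mathcal{K}(H)$, absorb to get $\phi\oplus\psi'$ approximately equivalent to $\phi$, symmetrically get $\psi\oplus\phi'$ approximately equivalent to $\psi$, and note that $\phi\oplus\psi'=\phi_e\oplus\phi'\oplus\psi'$ is unitarily equivalent to $\psi\oplus\phi'$.

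A second, smaller gap is in (2)$\Rightarrow$(3): a sequence of unitaries need not admit a strongly convergent subsequence, and a strong (let alone weak) limit of unitaries need only be an isometry, so ``extracting a strongly convergent subsequence whose limit is a unitary intertwiner'' is not available. Arveson instead shows that weak approximate equivalence forces $\mathrm{rank}\,\phi(a)=\mathrm{rank}\,\psi(a)$ for all $a$ (the rank is lower semicontinuous for the weak operator topology), and the rank function on the ideal $\{a\in\A:\phi(a)\in\mathcal{K}(H)\}$ determines the essential part up to genuine unitary equivalence. With these two steps repaired, the rest of your outline, including the ``in particular'' clause, is correct.
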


A further consequence of Voiculescu's theorem is that every separable unital subalgebra
of the Calkin algebra is equal to its double commutant in the Calkin algebra (see
\cite[p. 345]{arveson}; see also \cite{bic} for a version of this statement in the context
of ultrapowers). It is not clear whether $\textsf{MA}_\kappa$($\sigma$-centered) could be used to generalize
this fact to \cstar-algebras of density character $\kappa$,
even assuming they are separably representable.

\section{Independence} \label{s3}
In this section we prove item \ref{i2t1} of theorem \ref{t1}.
\begin{proof}[Proof of item \ref{i2t1} of theorem \ref{t1}]
Let $H$ be a separable Hilbert space and
$\A \subseteq \mathcal{B}(H)$
a maximal abelian atomic subalgebra, hence isomorphic to $\ell^\infty(\N)$.
Since the pure states of $\A$ annihilating $\A \cap \mathcal{K}(H)$ are
in bijection with the non-principal ultrafilters on $\N$ (see \cite[Example 6.2]{setoperator}),
there are $2^\mathfrak{c}$ of them. Since there are only $\mathfrak{c}$ (countable)
sequences of vectors in $H$,
there are $2^\mathfrak{c}$ states of $\A$ for which Glimm's lemma
fails (it actually fails for all pure states annihilating
$\A \cap \mathcal{K}(H)$, as shown in proposition 2.7 of \cite{hadwin}). We prove
the statement of item \ref{i2t1} of theorem \ref{t1} for $\lambda = \aleph_2$,
as the proof in the general case is analogous.
Consider a model of \textsf{ZFC} where $\mathfrak{c} = \aleph_1$ and
$2^{\aleph_1} = \aleph_3$ and add to it $\aleph_2$ Cohen reals.
In the generic extension we have $\mathfrak{c} = \aleph_2$, thus (the closure of)
$\A$ has density character strictly smaller than $\mathfrak{c}$. Glimm's lemma fails
for $\A$ also in the generic extension.
There are in fact at most $ \aleph_2$ new sequences of vectors of $H$,
which are still not enough to cover all the $\aleph_3$ states of $\A$ for which
Glimm's lemma failed in the ground model.
\end{proof}
The argument we just exposed can be generalized verbatim to other \cstar-algebras of
density character $\mathfrak{c}$ such as $\mathcal{B}(H)$ or $L^\infty([0,1])$, which all
have more than $2^{\mathfrak{c}}$ different states.\footnote{Notice that, if $V$ is the ground
model of \textsf{ZFC} and $V[G]$ a generic extension, the closure of $\mathcal{B}(H)^V$
in $V[G]$ is generally strictly contained in $\mathcal{B}(H)^{V[G]}$. The same happens for
$\ell^\infty(\N)$ and $L^\infty([0,1])$.}

\begin{acknow}
I would like to thank Ilijas Farah for the interesting conversations on this topic we had and for
his useful suggestions on the earlier drafts of this paper.
\end{acknow}

\bibliographystyle{amsalpha}
	\bibliography{Bibliography}

\end{document}